\newtheorem{theorem}{Theorem}
\theoremstyle{definition}
\renewcommand\qedsymbol{$\blacksquare$}
\title{The joint distribution of value and local time of simple random walk and reflected simple random walk}
\author{
Isaac Meilijson
\\
School of Mathematical Sciences,
Tel Aviv University, Israel
\\
{\em E-mail: \tt{isaco@tauex.tau.ac.il}}
\\
Yael Perlman
\\
Department of Management, Bar-Ilan University, Israel \\
{\em E-mail: \tt{yael.perlman@biu.ac.il}}
}
\date{\today}
\begin{document}

\maketitle
\renewcommand\qedsymbol{$\blacksquare$}
\begin{abstract}
The joint distribution of value and local time for Brownian Motion has been reported by Borodin and Salminen. Its asymptotic behavior for recurrent random walk has been presented by Jain and Pruitt. Motivated by the need for queue size control during a pandemic (Hassin, Meilijson and Perlman), the current study presents closed form formulas for random walk and reflected random walk with $\pm 1$ increments, not necessarily fair.

%
\end{abstract}

\section{Introduction}

Simple random walk SRW is the cumulative sum of i.i.d. $\pm 1$ increments, with some initial integer value. Reflected simple random walk RSRW is non-negative, with increments like SRW whenever positive, and increment $1$ when zero. The local time $L$ at zero up to time $n$ of either process is the number of its visits to zero up to time $n$. A formula will be derived for the joint distribution of the value of the process at time $n$ and the local time at zero up to time $n$, for SRW and RSRW. Corresponding formulas in continuous time exist for Brownian Motion (\cite{BorodinSalminen},\cite{JainPruitt}).

\bigskip

The motivation for this study is an M/M/$c$/$n$ service system observed when all servers are busy. The reflected random walk is the waiting room occupancy and local time is the number of arriving customers that fill up the queue to capacity. The joint distribution of RSRW and $L$ determine the distribution of the number of waiting customers an arriving customer will meet until entering into service. At the time of a pandemic, this distribution may provide a tool to determine a safe maximal queue size. This is a companion paper of Hassin, Meilijson and Perlman \cite{HMP} that considers Nash equilibrium, safe and socially optimal maximal queue sizes, under a utility function that take into account the negative network effects of waiting time, total time spent with waiting customers and the number of these customers, the subject matter that motivates the current study.

\bigskip

Formally, let $\epsilon_t \ ; \ t=1,2,\dots$ be i.i.d. with $P(\epsilon_t=1)=1-P(\epsilon_t=-1)=q \ , \ q \in (0,1)$, be the increments of simple random walk (SRW) $S_T=s+\sum_{t=1}^T \epsilon_t$ starting at $S_0=s$. Reflected simple random walk RSRW $R$ is defined as $R_T=s+\sum_{t=1}^T \delta_t$ (starting at $R_0=s \ge 0$), where $\delta_t=\epsilon_t$ if $R_{t-1}>0$ and $\delta_t=1$ otherwise (i.e., if $R_{t-1}=0$). The trajectories of $R$ are non-negative, increasing to $1$ whenever at zero and behaving as $S$ otherwise. $R$ can be identified with $|S|$ if $q={1 \over 2}$, but not otherwise. Let $L_0=0$ and for $t>0, L_t = L_{t-1} + I_{\{R_t=0\}}$ be the {\em local time} of RSRW $R$ at zero, the cumulative number of visits to zero in positive time.

\section{Reflected simple random walk}

Let $NT(T,s,r,l)$ be the number of $R$-trajectories of length $T \ge 1$ with $R_0=s \ge 0$, $R_T=r \ge 0$ and $L_T=l$. The ease of identifying the binomial distribution of $S_T$ (w.l.o.g $s=0$) stems from two distinct elements: All trajectories with $S_T=r$ have the same probability $q^{{T+r} \over 2} (1-q)^{{T-r} \over 2}$ and the number of such trajectories is straightforward to identify as $\binom{T}{{{T+r} \over 2}}$. This is not the case for the distribution of $R_T$, but as the next theorem shows, it is for the joint distribution of $R_T$ and $L_T$.

\begin{theorem} \label{localtimereflected}
Let $T \ge 1$ and $s, r, l$ be non-negative integers. Let $R$ be reflected simple random walk whose increments are positive with probability $q \in (0,1)$. Then

\begin{equation} \label{jointRL}
P(R_T=r, L_T=l | R_0 = s)  =  NT(T,s,r,l)q^{{{T+r-s} \over 2}-l+I_{\{r=0\}}-I_{\{s=0\}}}(1-q)^{{{T-r+s} \over 2}} \\
\end{equation}
where the number of trajectories $NT(T,s,r,l)$ is zero except for the feasible cases: $s$ any non-negative integer, $r \in \{s-T, s-T+2, s-T+4 \dots, s+T\} \cap [0,s+T], 0 \le l \le max(0,{{T-s-r} \over 2}+I_{\{s > 0\}})$. For these feasible cases the coefficients $MT(T,s,r,l)$ are as follows.

\begin{eqnarray}
NT(T,0,r,l) & = & \binom{T-l-1}{{{T+r} \over 2}-1}-\binom{T-l-1}{{{T-r} \over 2}-l-1} \nonumber \\
NT(T,s>0,r,0)& = & \binom{T}{{{T+r-s} \over 2}}-\binom{T}{{{T-r-s} \over 2}} \nonumber \\
NT(T,s>0,r,l>0) & = & \binom{T-l}{{{T-r-s} \over 2}-l+1}-\binom{T-l}{{{T-r-s} \over 2}-l} \label{jointRLtraj}
\end{eqnarray}
\end{theorem}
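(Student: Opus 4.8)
The plan is to prove the two parts of the statement separately: first that every feasible trajectory carries the displayed $q$-$(1-q)$ weight, and then the three combinatorial identities for the counts $NT$. For the weight, I would split the $T$ steps of a trajectory with $R_0=s$, $R_T=r$, $L_T=l$ into \emph{forced} steps, taken from height $0$ and hence deterministically $+1$ with conditional probability $1$, and \emph{free} steps, taken from height $\geq 1$ and equal to $+1$ with probability $q$ and $-1$ with probability $1-q$. Setting $Z=\{t:R_t=0\}$, the local time is $|Z\cap\{1,\dots,T\}|=|Z|-I_{\{s=0\}}$ and the number of forced steps is $|Z\cap\{0,\dots,T-1\}|=|Z|-I_{\{r=0\}}$; eliminating $|Z|$ gives exactly $F=l+I_{\{s=0\}}-I_{\{r=0\}}$ forced (up) steps. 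Since the displacement $r-s$ pins down $(T-r+s)/2$ down-steps (all free) and $(T+r-s)/2$ up-steps in total, the number of free up-steps is $(T+r-s)/2-F$, so every feasible trajectory has probability $q^{(T+r-s)/2-l+I_{\{r=0\}}-I_{\{s=0\}}}(1-q)^{(T-r+s)/2}$. This is the asserted factor and reduces the theorem to the identities for $NT$.

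I would next dispose of the boundary reductions. When $s=0$ the first step is a forced $+1$, so $R_1=1$, no visit to zero occurs at time $1$, and the last $T-1$ steps form an RSRW started at $1$ with the same visits to zero; hence $NT(T,0,r,l)=NT(T-1,1,r,l)$, and a one-line use of $\binom{n}{j}=\binom{n}{n-j}$ turns the third formula into the first, so it suffices to treat $s\geq 1$. For $s\geq 1$ and $l=0$ the walk never reaches $0$, so it is an ordinary SRW path from $s$ to $r$ that stays $\geq 1$; the Andr\'e reflection across $0$ counts these as all paths $s\to r$ minus the reflected paths $-s\to r$, namely $\binom{T}{(T+r-s)/2}-\binom{T}{(T-r-s)/2}$, which is the second formula.

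The substance is the case $s\geq 1$, $l\geq 1$. Here I would cut each trajectory at its visit times $0<t_1<\cdots<t_l\leq T$: an initial first-passage segment from $s$ down to $0$, then $l-1$ intermediate segments each a forced $+1$ followed by a first passage from $1$ back to $0$, and finally a terminal segment, a forced $+1$ followed by a path from $1$ to $r$ staying $\geq 1$. Every block is counted by the reflection/ballot principle, and $NT$ is their convolution over all compositions $m_0+\cdots+m_l=T$ of the block lengths. The main obstacle is collapsing this convolution to the single difference $\binom{T-l}{(T-r-s)/2-l+1}-\binom{T-l}{(T-r-s)/2-l}$.

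To overcome it I would strip the $l$ forced $+1$ steps (which accounts for the reduction of the length from $T$ to $T-l$ in the upper index) and reflect the successive descending excursions upward, so that the repeated returns to $0$ become one path of length $T-l$ that stays nonnegative and ends at height $r+s+l-2$; a single application of the reflection principle then produces the difference of two binomials directly, and the shift in the lower index records the stripped forced steps. Because the canonical $l$-fold marking needed to invert this surgery is delicate, the route I would actually carry out to rigor is induction on $T$: I would check that the proposed expressions satisfy the first-step recursions $NT(T,s,r,l)=NT(T-1,s+1,r,l)+NT(T-1,s-1,r,l)$ for $s\geq 2$ and $NT(T,1,r,l)=NT(T-1,2,r,l)+NT(T-2,1,r,l-1)$ for $s=1$, together with the base cases in small $T$ and the already-established $l=0$ row; these are Pascal-type identities for the binomial differences. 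The feasibility windows, in particular $l\leq(T-r-s)/2+1$ for $s\geq 1$, then emerge as the conditions that the lower binomial indices be nonnegative.
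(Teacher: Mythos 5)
Your proposal is correct, but it reaches the key identity by a genuinely different route than the paper. The agreements first: your forced/free step count, giving exactly $F=l+I_{\{s=0\}}-I_{\{r=0\}}$ forced up-steps and hence the exponent $\frac{T+r-s}{2}-l+I_{\{r=0\}}-I_{\{s=0\}}$ on $q$, is a more explicit version of what the paper dismisses as ``immediate to ascertain,'' and is arguably a welcome addition; your $s=0\to(s=1,T-1)$ reduction and the reflection-principle treatment of $s\ge 1, l=0$ coincide with the paper's. The divergence is in the main case $s\ge 1, l\ge 1$: the paper constructs an explicit bijection --- delete the $-1$ increment entering the last zero, which maps trajectories counted by $NT(T,s,r,l)$ onto those counted by $NT(T-1,s,r+1,l-1)$, iterate to reach $NT(T-l+1,s,r+l-1,1)$, and then pass from $l=1$ to $l=0$ with the correction term $NT(T-l,s-1,r+l-1,0)$ needed when $s>1$ (since not every zero-free path from $s>1$ visits level $1$). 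You sketch a related excursion surgery but, recognizing its delicacy, instead verify that the closed forms satisfy the first-step recursions $NT(T,s,r,l)=NT(T-1,s+1,r,l)+NT(T-1,s-1,r,l)$ for $s\ge 2$ and $NT(T,1,r,l)=NT(T-1,2,r,l)+NT(T-2,1,r,l-1)$, plus base cases and the $l=0$ row. These recursions are the correct first-step decompositions (the $s=1$ case correctly folds in the forced step after hitting zero), and the claimed formulas do satisfy them: writing $m=\frac{T-r-s}{2}$, both sides of the $s\ge 2$ recursion collapse via Pascal's rule to $\binom{T-l-1}{m-l+1}-\binom{T-l-1}{m-l-1}$, and the $s=1$ cases (both $l\ge 2$ and the crossover $l=1$, which mixes the two formulas) check out the same way, using $\binom{n}{k}=\binom{n}{n-k}$ where needed. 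The trade-off: the paper's bijection \emph{derives} the binomial differences and explains their form, while your induction only \emph{verifies} formulas that must be guessed in advance --- but it is mechanical, avoids the onto-ness argument and the $s>1$ correction that make the paper's mapping delicate, and handles all boundary cases ($r=0$, extreme $l$) uniformly under the convention that out-of-range binomials vanish. Both are complete proofs; yours trades insight for robustness.
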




\begin{proof}
For initial state $s=0$ the first increment is $+1$ a.s. Hence, the probability of arriving from $s=0$ to $(r,l)$ in time $T$ is the same as the probability of arriving from $s=1$ to $(r,l)$ in time $T-1$. It is straightforward to check that the top formula in (\ref{jointRL}) derives accordingly from any of the other two, that are the same formula for $s=1$ (but not for $s>1$).

The single-trajectory probabilities are immediate to ascertain, as is the fact that all trajectories with the same triple $(s,r,l)$ are equally likely. What remains is the combinatorial part.

For $s>0, r>0, l=0$, $NT(T,s,r,0)$ is the number of non-negative trajectories of length $T$ starting at $s$ and ending at $r>0$ without ever visiting zero. Simple random walk has $\binom{T}{{{T+r-s} \over 2}}$ as many trajectories, from which those that visit zero have to be subtracted. By the usual reflection principle, the latter are just as many as the number of SRW paths from $s$ to $-r$, totalling $\binom{T}{{{T-r-s} \over 2}}$. Hence,
$NT(T,s,r,0)=\binom{T}{{{T+r-s} \over 2}}-\binom{T}{{{T-r-s} \over 2}}$, as claimed.

For $s>0,l>0$, the set to be counted (with $NT(T,s,r,l)$ elements) is the set $A(T,s,r,l)$ of non-negative trajectories of length $T$ with ${{T+r-s} \over 2}$ positive increments and $l$ visits to zero in positive time. Let $l>0$. For every element in $A(T,s,r,l)$ there is a well defined time $t$, the time of last visit to zero. Remove the increment $\delta_t=-1$ from the string of increments to obtain a shorter string $(\delta_1, \delta_2,\cdots,\delta_{t-1},\delta_{t+1},\cdots,\delta_T)$ with partial sums $s=R_0, R_1, \cdots,R_{T-1}$. This shorter RW trajectory $R$ has length $T-1$, the same number ${{T+r-s} \over 2}$ of positive increments, and last visit to $1$ at time $t-1$. Furthermore, this mapping from the set of $A(T,s,r,l)$ with last visit to zero at time $t$ to the set of $A(T-1,s,r+1,l-1)$ with last visit to $1$ at time $t-1$ is one-to-one. As long as $l-1>0$,  $t$ partitions $A(T,s,r,l)$ (differently but) just as it partitions $A(T,s,r+1,l-1)$, so this mapping is onto and these two sets have the same number of elements. Since the number of trajectories is  invariant when $T$ and $l$ are decreased by $1$ while at the same time increasing $r$ by $1$. Hence, the answer is the same as $NT(T-l+1,s,r+l-1,1)$. The plan is to carry out one more transition from $l=1$ to $l=0$. The mapping described above to $NT(T-l,s,r+l,0)$ is correct only if $s=1$ (when the existence of a visit to $1$ is guaranteed) and needs a different treatment for $s>1$, in which case the trajectories counted by $NT(T-l,s,r+l,0)$ include those that don't visit $1$ at all, whereas the one-to-one-onto mapping was only to those that visit $1$. The trajectories going from $s>1$ to $r+l$ in time $T-1$ without visiting $1$ are the same as those going from $s-1$ to $r+l-1$ in time $T-l$ without visiting $0$, i.e., $NT(T-l,s-1,r+l-1,0)$ in number.

Summarizing and performing some book-keeping,

\bigskip

$NT(T,s,r,l) = NT(T-l,s,r+l,0)=\binom{T-l}{{{T+r-1} \over 2}}-\binom{T-l}{{{T+r+1} \over 2}}$ for the case $s=1$, while for $s>1$:
\begin{eqnarray}
 & & NT(T-l,s,r+l,0) - NT(T-l,s-1,r+l-1,0) \nonumber \\
 & = & \Big[\binom{T-l}{{{T-l+r+l-s} \over 2}}-\binom{T-l}{{{T-l-r-l-s} \over 2}}\Big]-\Big[\binom{T-l}{{{T-l+r+l-1-(s-1)} \over 2}}-\binom{T-l}{{{T-l-r-l+1-(s-1)} \over 2}}\Big] \nonumber \\
 & = & \Big[\binom{T-l}{{{T+r-s} \over 2}}-\binom{T-l}{{{T-r-s} \over 2}-l}\Big]-\Big[\binom{T-l}{{{T+r-s} \over 2}}-\binom{T-l}{{{T-r-s} \over 2}-l+1}\Big] \nonumber \\
 & = & \binom{T-l}{{{T-r-s} \over 2}-l+1}-\binom{T-l}{{{T-r-s} \over 2}-l} \nonumber
\end{eqnarray}
\end{proof}

The marginal distribution of $R_T$ can be alternatively obtained by Markovian matrix methods. Let $Q$ be the transition matrix of RSRW, with all entries zero except $Q(0,1)=1$ and $Q(i,i+1)=1-Q(i,i-1)=q$ for $i \ge 1$. Now let $Q_1=Q(0:N,0:N)$ for any $N$ exceeding $T+s$. Then the marginal distribution of $R_T$ is row $s$ of $Q_1^T$. 

Expected local time at zero up to time $T$, starting at $s$, can be obtained as follows. Since it is the sum over n (from $0$ to $T$) of $Q^n(s,0)$, it is the $(s,0)$ entry of $(I-Q_1)^{-1}(I-Q_1^{T+1})$. But expectations of nonlinear functions of local time, such as its distribution, seem to be outside the standard range of algebraic methods.

\begin{figure}[hbt!]
\begin{center}
{\includegraphics[scale=0.40]{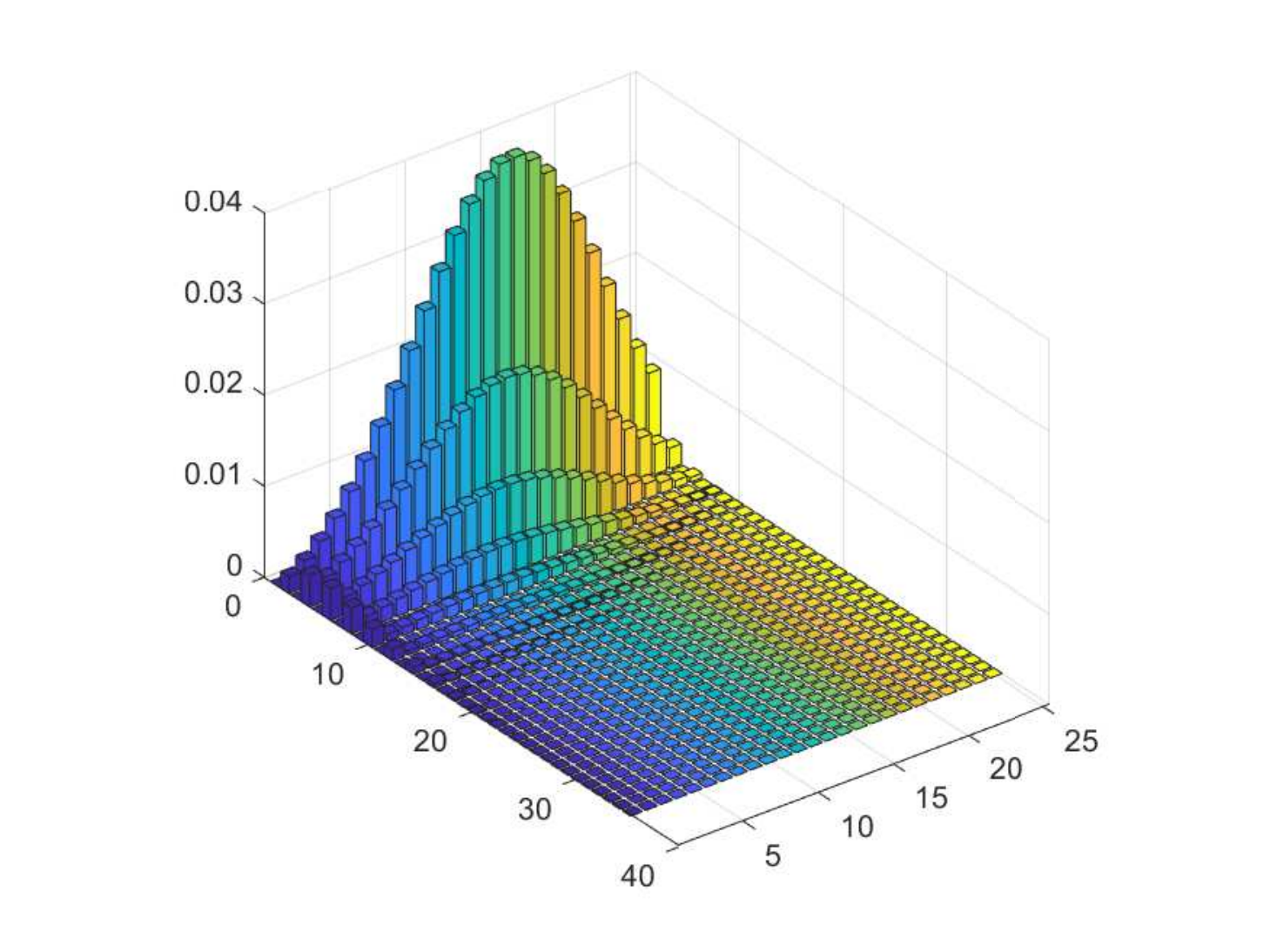}}
\qquad
{\includegraphics[scale=0.40]{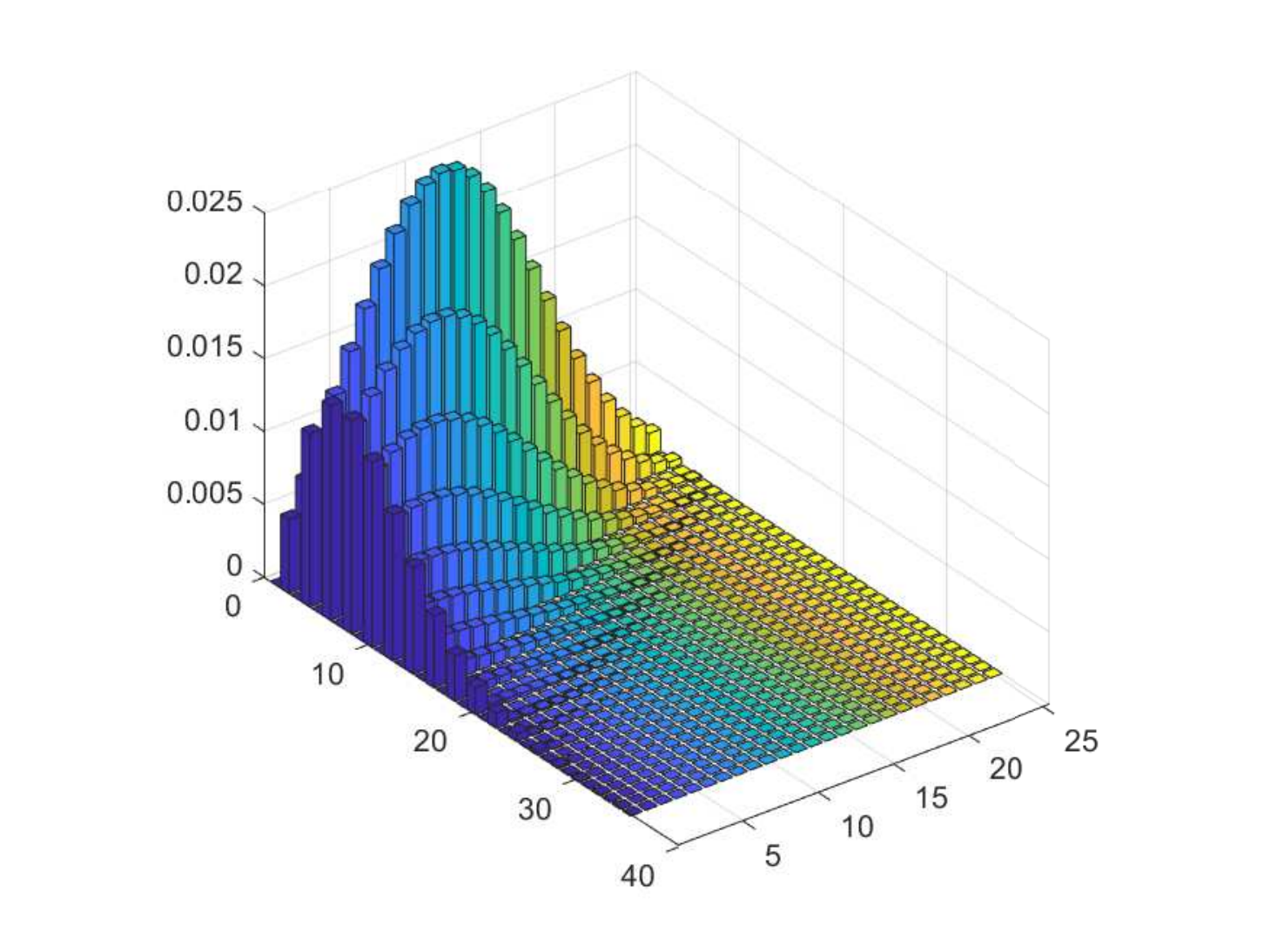}}
\qquad
{\includegraphics[scale=0.40]{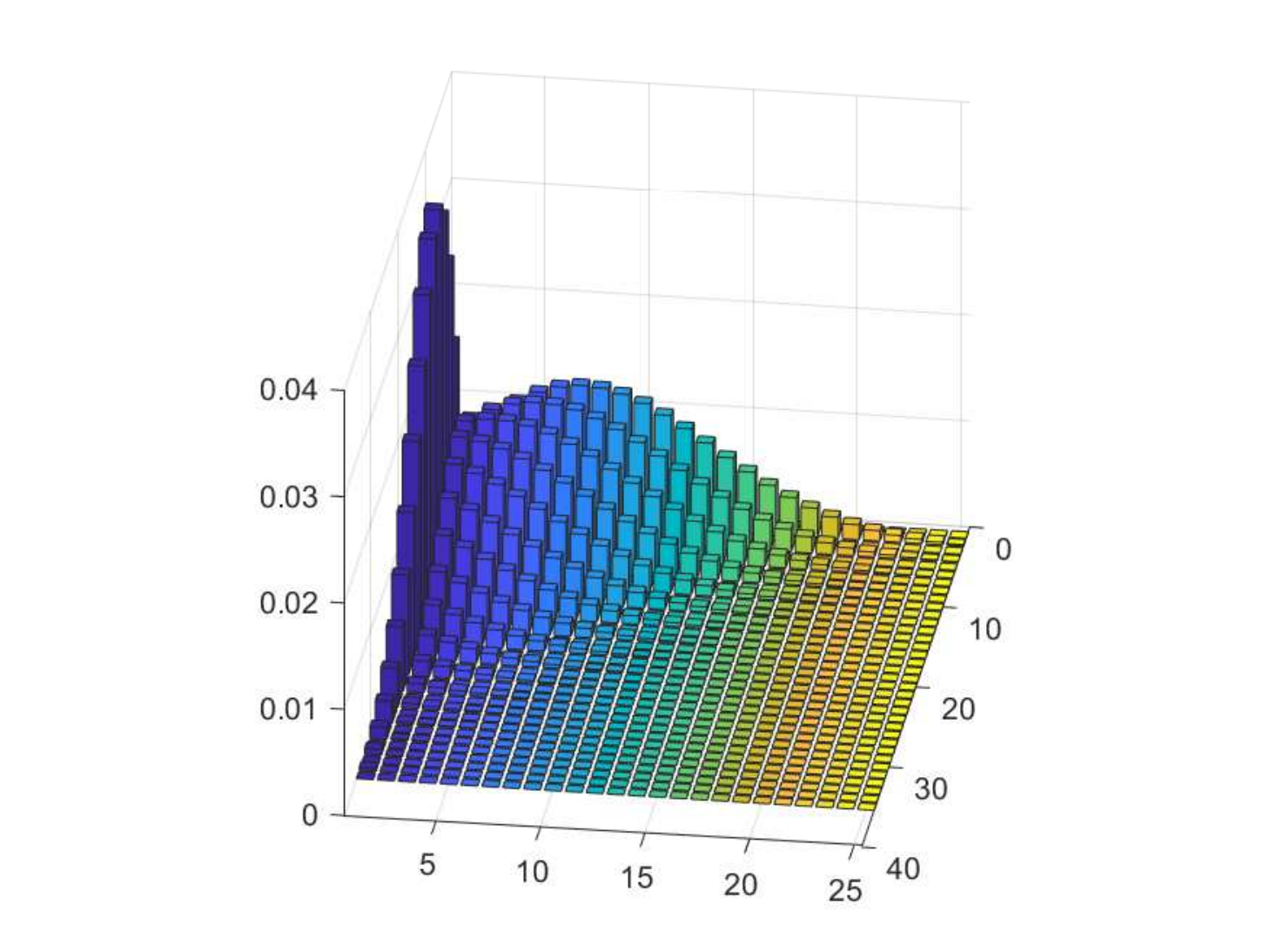}}
\qquad
{\includegraphics[scale=0.40]{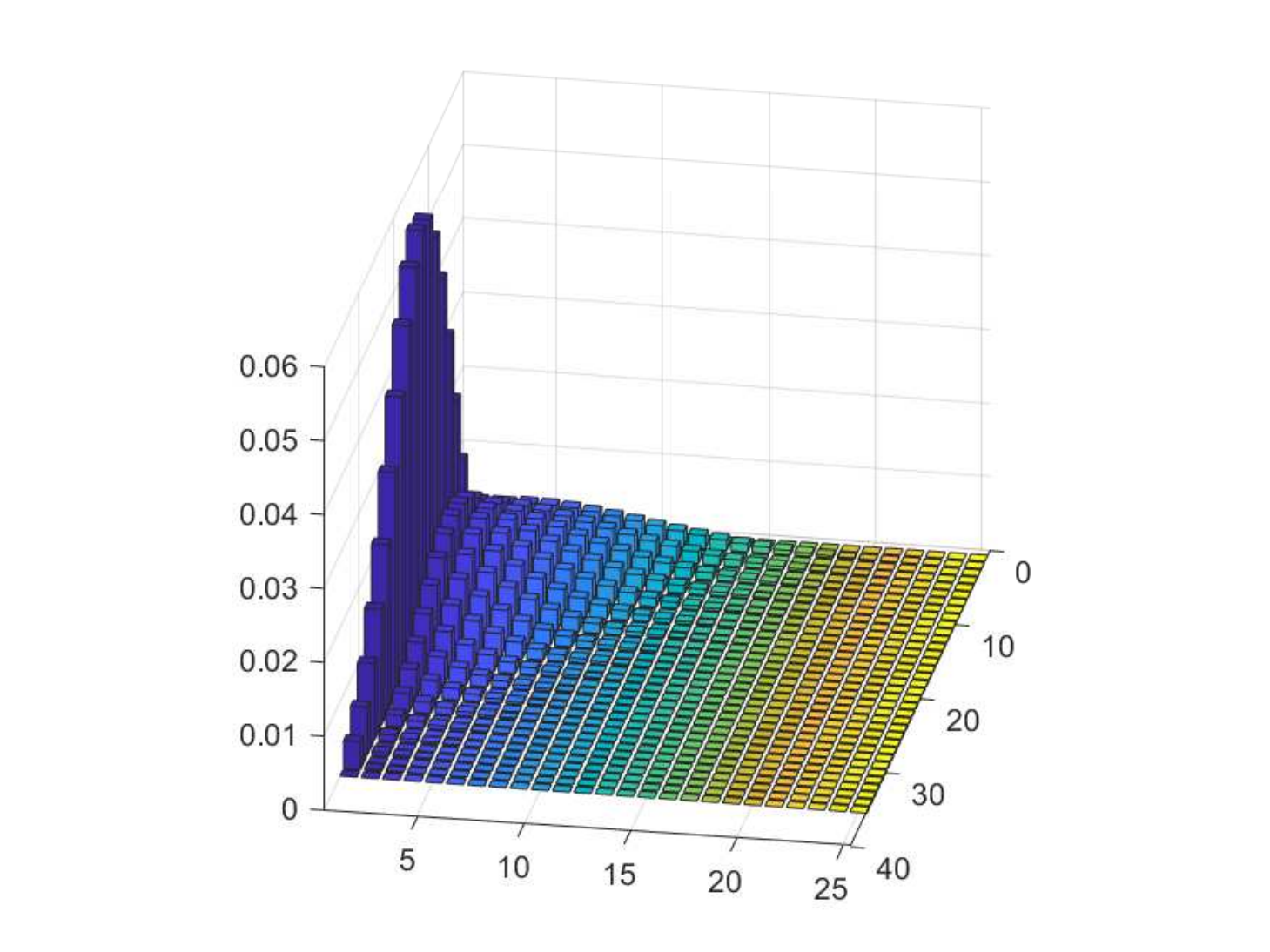}}
\end{center}
\caption{Joint distribution of value and local time at zero for reflected random walk starting at $s=3$ and ending at time $T=80$. From top left to bottom right, $q=0.40, 0.45, 0.50, 0.55$, displaying a transition from downdrift and large local time at zero to diffusion with positive drift and few returns to zero. Value has been displayed up to $35$ and local time at zero up to $25$, switching coordinates from top to bottom.}
\label{fig:jointdist}
\end{figure}

\begin{figure}[hbt!]
\begin{center}
{\includegraphics[scale=0.40]{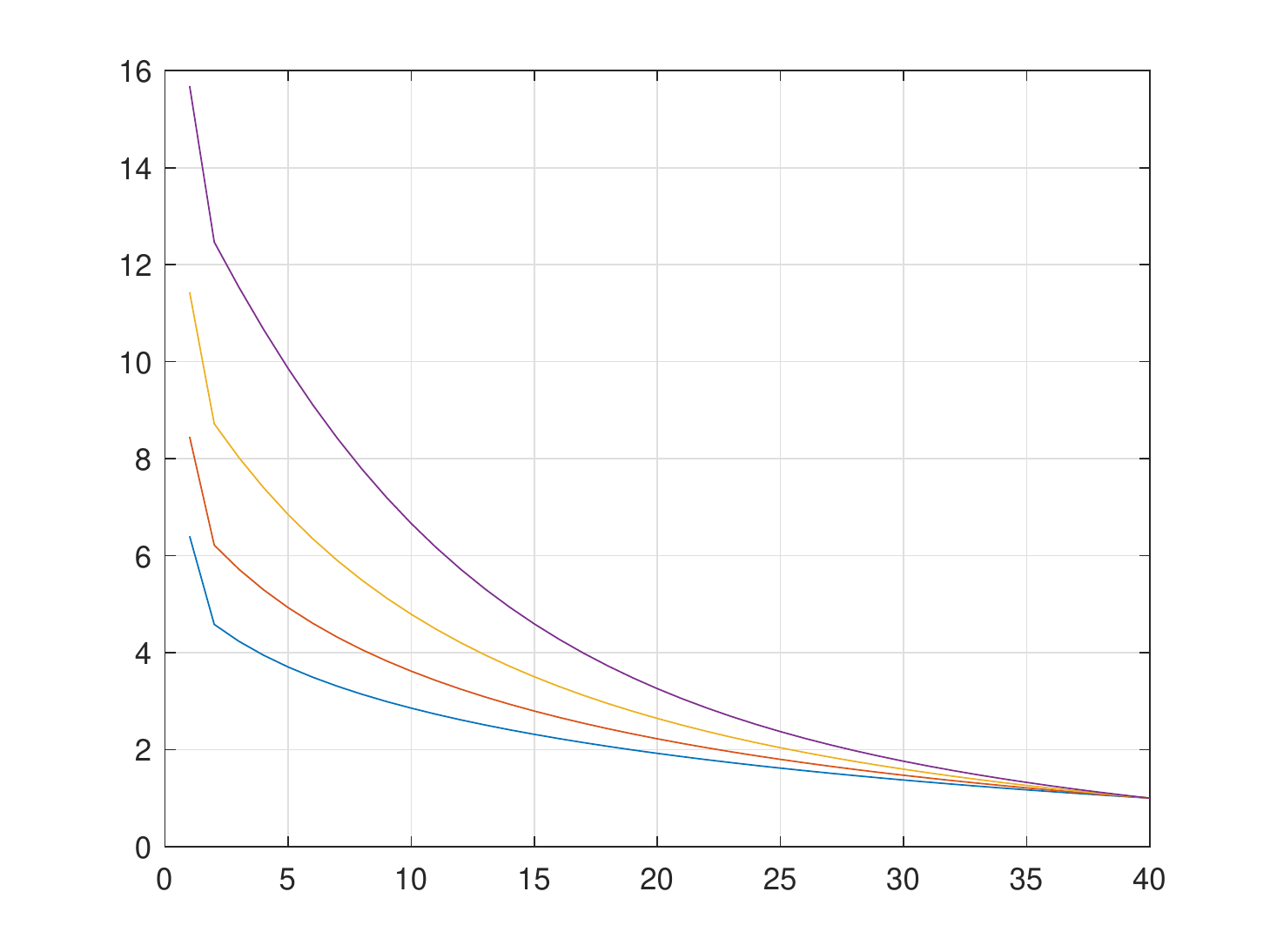}}
\qquad
{\includegraphics[scale=0.40]{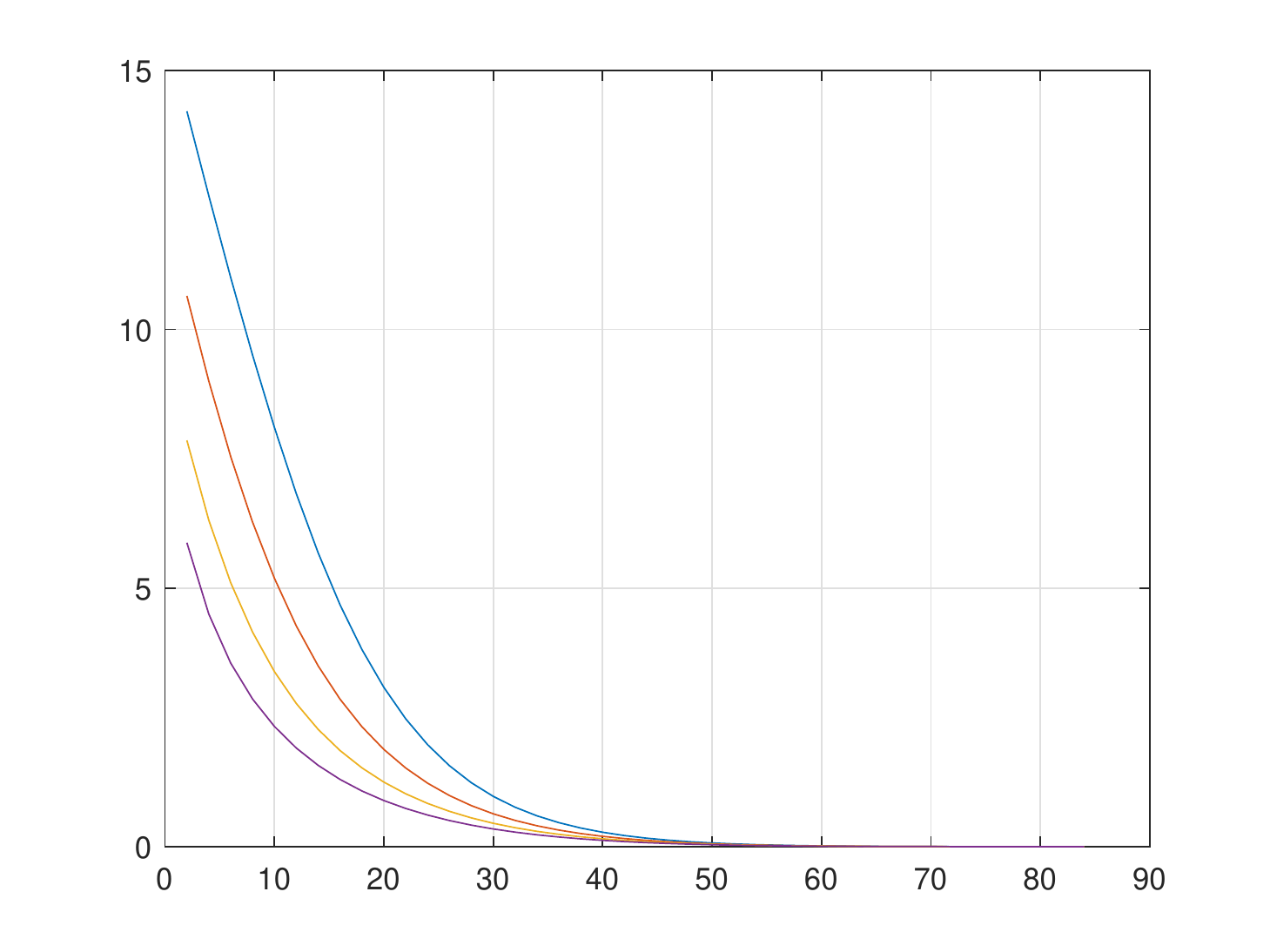}}
\end{center}
\caption{Joint distribution of value and local time at zero for reflected random walk starting at $s=3$ and ending at time $T=80$, for $q=0.40, 0.45, 0.50, 0.55$. Left, conditional expectation of local time given value ($q=0.40$ is highest). Right, conditional expectation value given local time ($q=0.40$ is lowest).}
\label{fig:condexp}
\end{figure}

\section{Simple random walk}

Rather than counting trajectories directly, their statistics will be derived from those for reflected simple random walk. In essence, each {\em solid bridge} of RSRW gives rise to two solid bridges of SRW, where a solid bridge is a sub-trajectory starting and ending at zero without zero values in between.

Adhering to the notation above, $S$ stands for SRW and $R$ stands for RSRW. Unlike RSRW where the probability of a trajectory from $s$ to $r$ depends on $l$, for SRW or simple random walk bridge $l$ can determine whether the trajectory is possible or impossible, but no internal information on the trajectory can influence the positive value of the probability because, in statistical terms, the sum of the increments is a sufficient statistic for $q$. Each trajectory from $s$ to $r$ in time $T$ has probability $q^{{{T+r-s} \over 2}}(1-q)^{{{T-r+s} \over 2}}$ and the only question is combinatorial, how to split the number $\binom{T}{{{T+r-s} \over 2}}$ of trajectories into the various subsets determined by $l$. Taking care of special cases first, the number of trajectories of length $T$ from $s \neq 0$ to $r$ with $(l=0,r \neq 0)$ or $(l=1,r = 0)$ is the same as the corresponding number of trajectories for RSRW from $|s|$ to $|r|$. The number of trajectories of length $T$ with $l=1$ from $s \neq 0$ to $r \neq 0$ is the same as the number of reflected random walk trajectories with $l=1$ from $|s|$ to $|r|$. For all other cases, $l'=l+I_{\{s=0\}}$ (augmented $l$) is at least $2$ and the trajectory has $l'-1$ solid bridges. Hence, the number of SRW trajectories is the corresponding number for RSRW (from $|s|$ to $|r|$) multiplied by $2^{l'}$.

Summarizing,

\begin{theorem} \label{localtimesimple}
Let $T \ge 2$ and $s, r, l$ be integers (with $l \ge 0$). Let $S$ be simple random walk whose increments are positive with probability $q \in (0,1)$. Then
\begin{equation} \label{jointSL}
P(S_T=r, L_T=l | S_0 = s) = MT(T,s,r,l)q^{{{T+r-s} \over 2}}(1-q)^{{{T-r+s} \over 2}}
\end{equation}
where the number of trajectories $MT(T,s,r,l)$ is zero except for the feasible cases: $s$ any integer, $r \in \{s-T, s-T+2, s-T+4, \dots, s+T\}, 0 \le l \le max(0,{{T-|s|-|r|} \over 2}+I_{\{s \neq 0\}})$. For these feasible cases the coefficients $MT(T,s,r,l)$ are expressed in terms of $l'=l+I_{\{s=0\}}$ as

\begin{equation}
\label{jointRLtrajsimple}
MT(T,s,r,l) = \begin{cases}
\begin{aligned}
    2^l \bigg [\binom{T-l-1}{{{T+|r|} \over 2}-1}-\binom{T-l-1}{{{T+|r|} \over 2}}\bigg] & ~s=0 \\
    \binom{T}{{{T+|r|-|s|} \over 2}}-\binom{T}{{{T-|r|-|s|} \over 2}} & ~s \neq 0, l=0 \\
    2^{l-1} \bigg [\binom{T-l}{{{T+|r|+|s|} \over 2}-1}-\binom{T-l}{{{T+|r|+|s|} \over 2}}\bigg] & ~s \neq 0, l>0
\end{aligned}
\end{cases}
\end{equation}

\end{theorem}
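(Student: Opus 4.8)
The plan is to reduce the entire statement to a counting identity, since—unlike the RSRW case of Theorem \ref{localtimereflected}—every SRW trajectory from $s$ to $r$ of length $T$ carries the same weight regardless of its local time. First I would note that any such trajectory has exactly $(T+r-s)/2$ up-steps and $(T-r+s)/2$ down-steps, forced by the length and the net displacement, so its probability is $q^{(T+r-s)/2}(1-q)^{(T-r+s)/2}$ independently of $l$. Hence $P(S_T=r, L_T=l\mid S_0=s)$ is this weight times the number $MT(T,s,r,l)$ of trajectories realizing the pair $(r,l)$, and all the content is the combinatorial formula for $MT$. I would also record the support constraints (the parity of $r-s$, the range of $r$, and $0\le l\le \max(0,(T-|s|-|r|)/2+I_{\{s\neq 0\}})$) as immediate byproducts of the decomposition below.

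Next I would pass to the case $s,r\ge 0$ by reflecting the trajectory about the origin, an operation that preserves the number of zero-visits while sending $s,r$ to $|s|,|r|$; this is what justifies the $|s|,|r|$ appearing throughout (\ref{jointRLtrajsimple}). Then I would decompose each trajectory at its visits to zero in positive time, $t_1<\cdots<t_l$. This splits the path into an initial run on $[0,t_1]$ (absent when $s=0$, and forced to stay on the side of $s$ when $s\neq 0$), a block of complete solid bridges $[t_1,t_2],\dots,[t_{l-1},t_l]$ (with $[0,t_1]$ adjoined as an extra complete bridge when $s=0$), and a final run on $[t_l,T]$ whose side is pinned by the sign of $r$. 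The number of complete solid bridges is exactly $l'-1$, where $l'=l+I_{\{s=0\}}$.

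The key step is the bijection with RSRW. Each complete solid bridge may lie above or below zero, whereas every bridge of RSRW lies above zero; reflecting each below-zero bridge to its mirror image therefore exhibits a $2^{l'-1}$-to-one map from SRW trajectories onto the RSRW trajectories from $|s|$ to $|r|$ with the same visits to zero. The initial and final runs contribute no factor of two, because their sides are already determined by $s$ and $r$ and, after the reflection to $|s|,|r|$, coincide with the corresponding RSRW runs. This yields $MT(T,s,r,l)=2^{l'-1}\,NT(T,|s|,|r|,l)$ in the two nontrivial cases, together with the plain identity $MT(T,s,r,0)=NT(T,|s|,|r|,0)$ when $s\neq 0$ and $l=0$ (no complete bridges, factor $2^0=1$). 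Substituting the formulas of Theorem \ref{localtimereflected} for $NT$ and rewriting the second binomial coefficient through $\binom{n}{k}=\binom{n}{n-k}$ then produces (\ref{jointRLtrajsimple}).

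The main obstacle I anticipate is verifying that the reflection map really is a bijection in the boundary situations. I would check the case $r=0$, where the last zero is $t_l=T$ and the final run is empty; the case $l=1$, where there are no complete bridges and the factor collapses to $1$, recovering the special cases flagged in the text before the theorem; and the case $s=0$, where—in contrast to RSRW—the first step is not forced, so the segment $[0,t_1]$ is itself a free bridge and accounts for the promotion of $2^{l-1}$ to $2^{l'-1}=2^l$. Each of these requires confirming that the bridge count $l'-1$ and the determined signs of the end runs are correct, and that no trajectory is double-counted or lost.
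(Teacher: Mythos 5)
Your proposal is correct and takes essentially the same route as the paper: equal trajectory probabilities reduce the theorem to a counting problem, the path is decomposed at its zero visits into end runs pinned by the signs of $s$ and $r$ plus $l'-1$ complete solid bridges, and flipping bridges gives a $2^{l'-1}$-to-one correspondence with the RSRW counts of Theorem \ref{localtimereflected}, into which the formulas of (\ref{jointRLtraj}) are substituted using $\binom{n}{k}=\binom{n}{n-k}$. Your multiplier $2^{l'-1}$ is in fact stated more carefully than in the paper's prose, which says ``multiplied by $2^{l'}$'' (a slip, since its displayed formulas (\ref{jointRLtrajsimple}) agree with your $2^{l'-1}$).
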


\bigskip

\section{Mixture distributions} 

As expressed by (\ref{jointRLtraj}) and (\ref{jointRLtrajsimple}), and illustrated in Figures \ref{fig:jointdist} and \ref{fig:bothcases}, each of the two joint distributions (\ref{jointRL}) and (\ref{jointSL}) of value at time $T$ and local time at zero up to time $T$, starting at $s>0$, is a mixture of two (discrete but) smooth distributions, one modelling paths with no visits to zero ($L=0$, middle term in (\ref{jointRLtraj})) and the other covering all paths that visit zero ($L>0$, last term in (\ref{jointRLtraj})). Figure \ref{fig:bothcases} displays RSRW (left) and SRW (right) for $T=160, s=5,q=0.45$. 

\bigskip

{\bf The univariate mixture component $\{L=0\}$ is log-concave in $r$}. This distribution is analyzed in detail for even $r$. The case of odd $r$ only requires a minor adjustment in the auxiliary terms $W$ and $x$ to be defined. Consider the logarithm of the probability (\ref{jointRL}) as a function of $r$, for fixed $T, q, l=0$ and $s>0$. It is the sum of $\log(NT)$ and a linear function of $r$. Hence, if $\log(NT)$ is a concave function of $r$, the $\{L=0\}$-component of (\ref{jointRL}) is log-concave for each $q$. To show concavity of $\log(NT)$ means to show that the increment $\log(NT(T,s,r+2,0))-\log(NT(T,s,r,0))$ is a decreasing function of $r$. In other words, that ${{NT(T,s,r+2,0)} \over {NT(T,s,r,0)}} \le {{NT(T,s,r,0)} \over {NT(T,s,r-2,0)}}$. The proof will be finished by expressing the cross-product $NT(T,s,r,0)^2-NT(T,s,r-2,0)NT(T,s,r+2,0)$ as the sum of three non-negative terms, where $W={{T-s} \over 2}<{{T} \over 2}$ and $x={r \over 2}$. These are formal equalities. Whenever the RHS is not well defined (non-positive terms in the denominator), the LHS is non-negative, as needed.

\begin{eqnarray}
\binom{T}{W+x}^2-\binom{T}{W+x-1} \binom{T}{W+x+1}& = &\binom{T}{W+x}^2 {{T+1} \over {(W+x+1)(T-W-x+1)}} \nonumber \\
\binom{T}{W-x}^2-\binom{T}{W-x-1} \binom{T}{W-x+1}& = &\binom{T}{W-x}^2 {{T+1} \over {(W-x+1)(T-W+x+1)}} \nonumber 
\end{eqnarray}
\begin{eqnarray}
 & & \binom{T}{W-x+1} \binom{T}{W+x+1}+\binom{T}{W+x-1} \binom{T}{W-x-1}-2\binom{T}{W+x} \binom{T}{W-x} \nonumber \\
 & = & (T+1)(T-1-2 W)[{1 \over {(W+1)^2-x^2}}+{1 \over {(T-W+1)^2-x^2}}]  \label{sausage}
\end{eqnarray}

Log concavity of the bivariate mixture component $\{L>0\}$ in $(r,l)$ is under study, and the report will be updated accordingly. Concavity on a finite subset of $R_2$ requires special treatment.Kannai \cite{kannai} is a good start.


\begin{figure}[hbt!]
\begin{center}
{\includegraphics[scale=0.40]{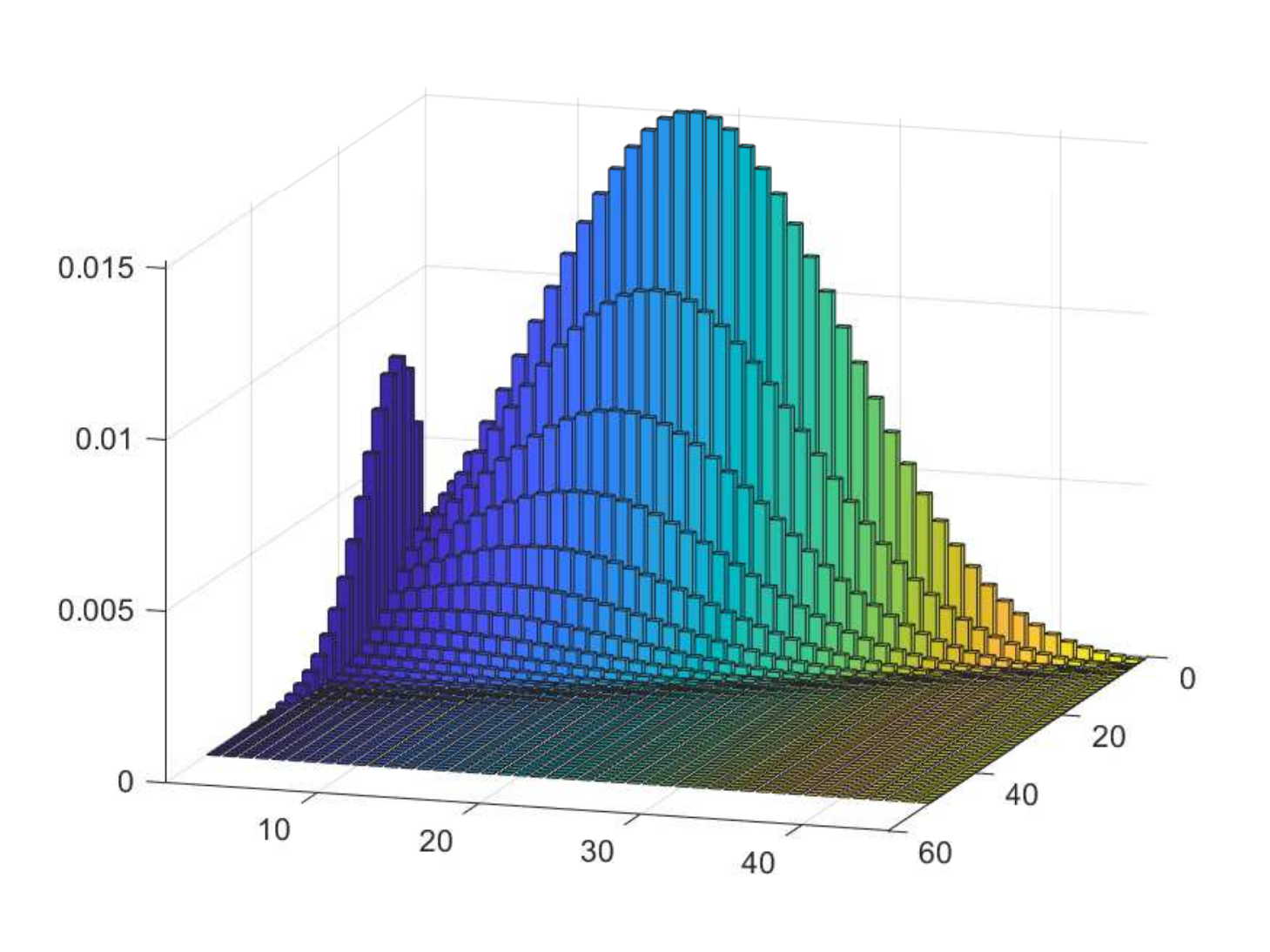}}
\qquad
{\includegraphics[scale=0.40]{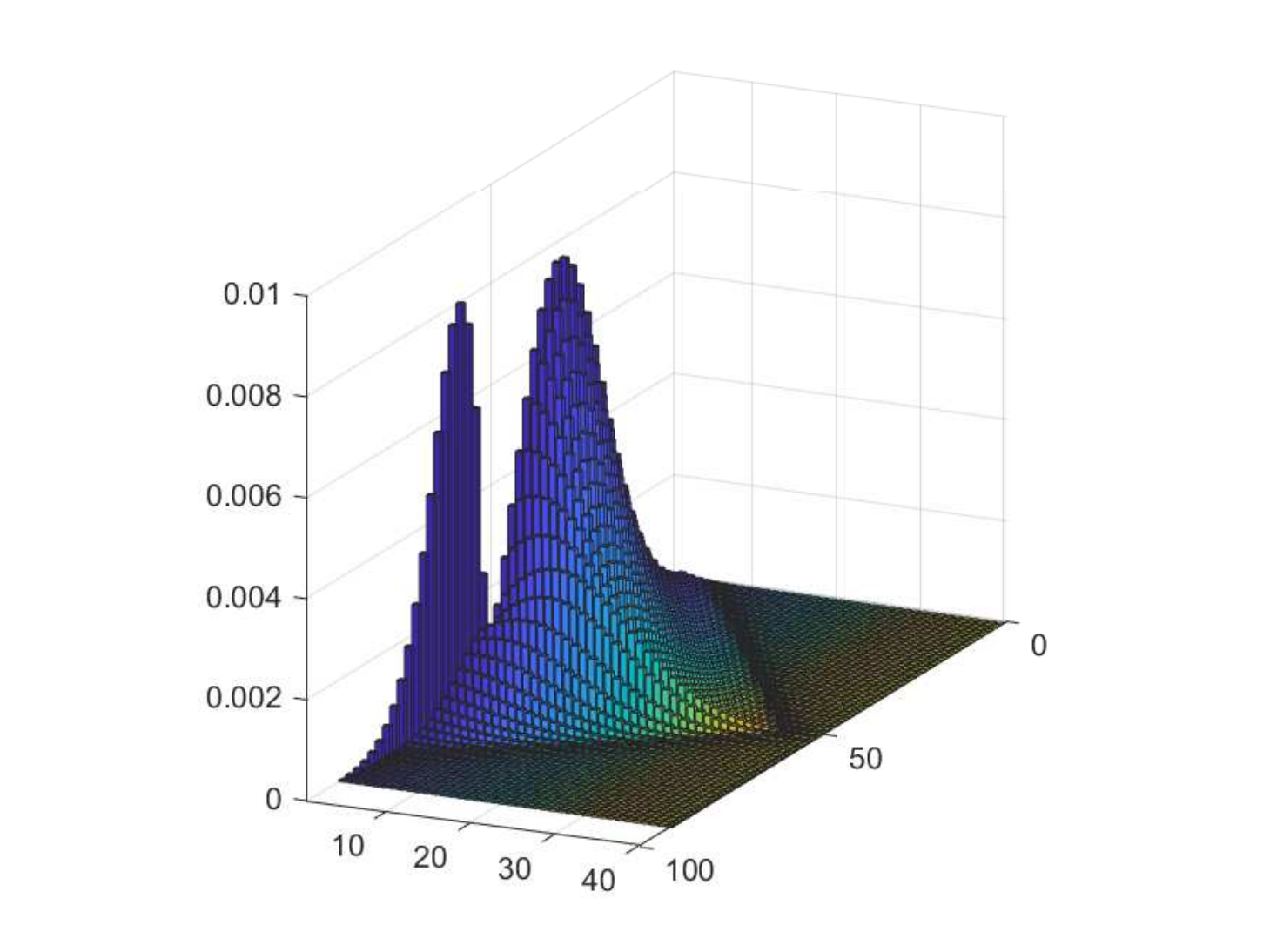}}
\end{center}
\caption{Joint distribution of value and local time at zero for reflected (left) and simple (right) random walk starting at $s=5$ and ending at time $T=160$, for $q=0.45$. Both graphs display mixtures of two distributions, one with zero local time and the other with visits to zero. The value scale of SRW has zero at coordinate 60.}
\label{fig:bothcases}
\end{figure}

\section*{Funding} This research was supported by the Israel Science Foundation, grant No. 1898/21.

\bibliographystyle{plain}


\end{document}